\numberwithin{equation}{section}
\theoremstyle{plain}
\newtheorem{theorem}{Theorem}[section]
\newtheorem{lemma}[theorem]{Lemma}
\newtheorem{corollary}[theorem]{Corollary}
\theoremstyle{definition}
\theoremstyle{remark}
\newtheorem{remark}[theorem]{Remark}
\renewcommand{\tilde}{\widetilde}
\renewcommand{\emptyset}{\varnothing}
\newcommand{\defeq}{:=}
\newcommand{\eqdef}{=:}
\newcommand{\om}{\omega}
\newcommand{\ee}{\mathrm{e}}
\newcommand{\e}{\varepsilon}
\newcommand{\eps}{\varepsilon}
\newcommand{\diam}{\textup{diam}}
\newcommand{\inte}{\textup{int}}
\newcommand{\R}{\mathbb{R}} 
\newcommand{\N}{\mathbb{N}} 
\renewcommand{\subset}{\subseteq}
\begin{document}

\title[Lattice self-similar sets are not Minkowski measurable]{Lattice self-similar sets on the real line are not Minkowski measurable}

\author{Sabrina Kombrink}
\address[Sabrina Kombrink]{Universit\"at zu L\"ubeck, Institut f\"ur Mathematik, Ratzeburger Allee 160, 23562 L\"ubeck, Germany}
\email{kombrink@math.uni-luebeck.de}
\author{Steffen Winter}
\thanks{This research was initiated and carried out while the authors were staying at the Institut Mittag-Leffler. The authors would like to thank the staff of the institute as well as the participants and organisers of the 2017 research programme \emph{Fractal Geometry and Dynamics} for the stimulating atmosphere, the excellent working environment, warm hospitality and financial support.}
\address[Steffen Winter]{Karlsruhe Institute of Technology, Department of Mathematics, Englerstr. 2, 76131 Karlsruhe, Germany}
\email{steffen.winter@kit.edu}

\begin{abstract}
	We show that any nontrivial self-similar subset of the real line that is invariant under a lattice iterated function system (IFS) satisfying the open set condition (OSC) is not Minkowski measurable. So far, this was only known for special classes of such sets. Thereby, we provide the last puzzle-piece in proving that under OSC a nontrivial self-similar subset of the real line is Minkowski measurable iff it is invariant under a nonlattice IFS, a 25-year-old conjecture.
\end{abstract}
\subjclass[2010]{28A80, 28A75, 28A12, 52A39}
\keywords{Minkowski content, open set condition, self-similar set}
\maketitle
\section{Introduction}
The Minkowski content was proposed by B.~B.~Mandelbrot \cite{Mandelbrot} as texture parameter for irregular sets (a measure of ``lacunarity''). Indeed, the Minkowski content can be used to understand the geometry of a fractal set beyond its (Hausdorff or Minkowski) dimension and in particular is a tool to distinguish between sets of the same dimension.
Besides its geometric relevance, the Minkowski content has attracted attention in connection with the Weyl-Berry conjecture concerning the distribution of the eigenvalues of the Laplacian on bounded domains $\Omega\subset\R^d$ with fractal boundaries. More precisely, M.~L.~Lapidus and C.~Pomerance showed in \cite{LapidusPomerance} that if $\Omega\subset\R$, then the second asymptotic term of the eigenvalue counting function can be expressed in terms of the Minkowski dimension and the Minkowski content of the boundary of $\Omega$, whenever these quantities exist.
However, although much progress has been made in recent years, in general it is not easy to decide, whether the Minkowski content of a given fractal set exists or not.

Assuming the open set condition (OSC) it was conjectured in \cite[Conjecture~3]{Lapidus_Dundee} (see also \cite[Section~5.2]{Gatzouras}) that a nontrivial self-similar set is Minkowski measurable (i.\,e.\ its Minkowski content exists, and is positive and finite) iff it arises from a nonlattice iterated function system (IFS). The progress in resolving this conjecture is as follows: Self-similar subsets of $\mathbb R$ generated from nonlattice IFS satisfying OSC were shown to be Minkowski measurable in \cite{Lapidus_Dundee,Falconer_Minkowski,Gatzouras} (the results of \cite{Gatzouras} hold for self-similar subsets of $\mathbb R^d$, too). For nontrivial self-similar subsets of $\mathbb R$ the converse, i.\,e.\ lattice sets are not Minkowski measurable, was shown in \cite{LapidusvanFrankenhuijsen} under additional assumptions. These assumptions address the geometric structure of the underlying feasible open set for the OSC and have been weakened in \cite{Euler,ErinSteffen}, see Section~\ref{sec:Minkowskimeasurability} for more details. 
However, up to now the conjecture remained unresolved for large classes of self-similar sets, see Section~\ref{sec:examples} for examples.

In the present article we fully remove the assumptions of \cite{LapidusvanFrankenhuijsen,Euler,ErinSteffen} and in this way provide the last puzzle-piece in proving that under OSC a nontrivial self-similar subset of $\mathbb R$ is Minkowski measurable iff it arises from a nonlattice IFS. This resolves the conjecture stated in \cite[Conjecture~3]{Lapidus_Dundee} and \cite[Section~5.2]{Gatzouras} for self-similar sets in $\mathbb R$.

The article is organised as follows.
After some preliminaries in Sections~\ref{sec:prelim:Minkowski} and \ref{sec:ss_OSC} we give a brief exposition of the key results from the literature in Section~\ref{sec:Minkowskimeasurability}. A class of self-similar sets for which Minkowski measurability had previously not been understood is discussed in Section~\ref{sec:examples}. Our main results are stated in Section~\ref{sec:results} and proved in Section~\ref{sec:proofs}.
We conclude by showing in Section~\ref{sec:equiv} that for sets in $\R$ the above-mentioned results from \cite{Euler,ErinSteffen} are equivalent.

\section{Preliminaries} \label{sec:prelim}
\subsection{Minkowski measurability}\label{sec:prelim:Minkowski}
Let $A$ denote a compact subset of the one-dimensio\-nal Euclidean space $(\mathbb R,\lvert\cdot\rvert)$ and let $\e>0$. Define the \emph{$\e$-parallel set} of $A$ to be $A_{\e}\defeq\{x\in\mathbb R\mid \inf_{a\in A}\lvert x-a\rvert\leq\e\}$. If the \emph{Minkowski dimension} $\dim_M(A)\defeq 1-\lim_{\e\searrow 0}\log(\lambda(A_{\e}))/\log(\e)$ exists, then we consider the \emph{rescaled volume function} $\e\mapsto\e^{\dim_M(A)-1}\lambda(A_{\e})$ defined on $(0,\infty)$, where $\lambda$ denotes the Lebesgue measure in $\R$.
If its limit as $\e\searrow 0$ exists, then we write
\begin{equation*}
	\mathcal M(A)\defeq\lim_{\e\searrow 0}\e^{\dim_M(A)-1}\lambda(A_{\e})
\end{equation*}
and call this value the \emph{Minkowski content} of $A$. If $\mathcal M(A)$ exists, and is positive and finite then we say that $A$ is \emph{Minkowski measurable}.

\subsection{Self-similar sets, open set condition, (non-)lattice and nontrivial}\label{sec:ss_OSC}
Let $\Phi\defeq\{\phi_1,\ldots,\phi_N\}$ with $N\in\mathbb N$, $N\geq2$ denote an iterated function system (IFS) consisting of similarities $\phi_j$ acting on $\mathbb R$.
Suppose that the IFS $\Phi$ satisfies the \emph{open set condition (OSC)}. That is, there exists a nonempty open set $O$ such that
\begin{equation}\label{eq:OSC}
	\phi_i(O)\subset O\quad\text{and}\quad \phi_i(O)\cap\phi_j(O)=\emptyset\quad \text{for}\ i,j\in\Sigma, i\neq j
\end{equation}
where $\Sigma\defeq\{1,\ldots,N\}$.  Any nonempty open set $O$ satisfying \eqref{eq:OSC} shall be called a \emph{feasible open set} for the IFS $\Phi$.
Let $r_i$ denote the similarity ratio of $\phi_i$. We say that $\Phi$ is \emph{lattice}, if the set $\{\log(r_i)\mid i\in\Sigma\}$ generates a discrete subgroup of $(\mathbb R,+)$. Otherwise, $\Phi$ is said to be \emph{nonlattice}. If $\Phi$ is lattice, then there exists a maximal $a>0$ such that $\{\log(r_i)\mid i\in\Sigma\}\subset a\mathbb Z$ and we call $r\defeq \ee^{a}$ the \emph{base} of $\Phi$.

The natural action of $\Phi$ on the class of subsets of $\mathbb R$ is defined via $\Phi A\defeq\bigcup_{i\in\Sigma} \phi_i A$ for $A\subset\mathbb R$.
By Hutchinson's theorem, there exists a unique nonempty compact set $F$ satisfying the invariance relation $\Phi F=F$. This set $F$ is called the \emph{self-similar set} associated with $\Phi$.
It is well-known that under OSC $\dim_M(F)$ exists and that $\#F>1$, where $\#$ denotes the cardinality.

$F$ is called \emph{nontrivial} if $\dim_M(F)<1$. Nontriviality of $F$ is equivalent to the assertion that any feasible open set $O$ satisfies $O\setminus\overline{\Phi O}\neq\emptyset$, see \cite[Corollary~5.6]{ErinSteffen_Tilings}. Here, $\overline{B}$  and $\partial {B}$ denote the topological closure and boundary of a set $B$ respectively.
A feasible open set $O$ for $\Phi$ is called \emph{strong}, if it has nonempty intersection with $F$, i.\,e.\ $O\cap F\neq \emptyset$.
Moreover, following \cite{ErinSteffen,W15} $O$ is called \emph{compatible}, if $\partial O\subset F$. (Notice, in \cite{ErinSteffen_Tilings} $O$ is called \emph{compatible} if $\partial\overline{O}\subset F$, which is a weaker condition on $O$.)
Let $\pi_F$ denote the \emph{metric projection} onto $F$, which is defined on the set of points $x\in \R$ with a unique nearest neighbour $y$ in $F$ by $\pi_F(x)\defeq y$. The set $O$ is said to satisfy the \emph{projection condition} if $\phi_i
O\subset\overline{\pi_F^{-1}(\phi_i F)}$ for $i\in\Sigma$.

\subsection{Known results on Minkowski measurability of self-similar sets in $\R$} \label{sec:Minkowskimeasurability}
Let $F\subset\R$ be the self-similar set of an IFS $\Phi$ as defined in Section~\ref{sec:ss_OSC} and let $I$ denote the interior of the convex hull of $F$, that is, $\overline{I}$ is the smallest closed interval containing $F$. Note that since $F$ is not a singleton, $I$ is nonempty.
\begin{theorem}\label{thm:literature}
	Suppose that $\Phi$ satisfies OSC.
	\begin{enumerate}
		\item\label{it:Falconer} \cite{Lapidus_Dundee,Falconer_Minkowski} If $\Phi$ is nonlattice and $\phi_i (\overline{I})\cap \phi_j(\overline{I})=\emptyset$ for $i\neq j$ (i.\,e.\ the \emph{strong separation condition} is satisfied), then $F$ is Minkowski measurable.
		\item\label{it:Gatzouras} \cite{Gatzouras} If $\Phi$ is nonlattice, then $F$ is Minkowski measurable.
		\item\label{it:LvF} \cite{LapidusvanFrankenhuijsen} If  $\Phi$ is lattice,
		$F$ is nontrivial and $I$ is a feasible open set for $\Phi$, then $F$ is not Minkowski measurable.
		\item\label{it:Euler} \cite{Euler} If  $\Phi$ is lattice,
		$F$ is nontrivial and $\Phi^m I$ is a feasible open set for $\Phi$ for some $m\in\mathbb N_0$, then $F$ is not Minkowski measurable.
		\item\label{it:ErinSteffen} \cite{ErinSteffen} Assume existence of a strong feasible open set $O$ for $\Phi$ that satisfies the projection condition and that allows for a finite partition of $(0,\infty)$ so that $\e\mapsto\lambda (F_{\e}\cap (O\setminus\Phi O))$ is polynomial on each partition interval. If $\Phi$ is lattice and $F$ is nontrivial, then $F$ is not Minkowski measurable.
	\end{enumerate}
\end{theorem}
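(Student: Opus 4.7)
Theorem~\ref{thm:literature} is a compilation of results from five different papers, so my proof proposal is really a road map to the techniques used in the cited works. A unifying theme is to set $D\defeq\dim_M(F)$, exploit the self-similarity decomposition $F_\e = \bigcup_i\phi_i(F_{\e/r_i})$ (up to small-$\e$ error at the cell boundaries) to rewrite the rescaled volume $V(\e)\defeq\e^{D-1}\lambda(F_\e)$ as the solution of a multiplicative renewal equation
\begin{equation*}
V(\e)=\sum_{i\in\Sigma} r_i^D\,V(\e/r_i)+g(\e),
\end{equation*}
and then to apply a nonlattice or lattice renewal theorem depending on the case at hand.

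For parts~\ref{it:Falconer} and \ref{it:Gatzouras}, the plan is to make this decomposition rigorous. Under the strong separation condition (part~\ref{it:Falconer}) the inhomogeneity $g(\e)$ is essentially the parallel volume of the gaps between the cells $\phi_i(\overline I)$, which after the logarithmic substitution $\e=\ee^{-t}$ becomes a directly Riemann integrable function; Feller's renewal theorem in the nonlattice case then yields a positive finite limit for $V(\e)$, giving Minkowski measurability. Under OSC alone (part~\ref{it:Gatzouras}) the cells may touch, and one first has to absorb the touching contributions into a modified but still well-behaved inhomogeneity before invoking the same renewal theorem.

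For the non-measurability statements \ref{it:LvF}--\ref{it:ErinSteffen}, the lattice version of the renewal theorem yields a multiplicatively periodic limit for $V(\e)$ along each geometric sequence $\e=r^n\e_0$, and the task becomes to exhibit a tractable inhomogeneity $g$ and to rule out the degenerate case of a constant periodic limit. In \cite{LapidusvanFrankenhuijsen} this is done when $I$ itself is a feasible open set, since then $I\setminus\Phi I$ is a finite union of open intervals whose parallel volume is piecewise linear in $\e$, and the complex-dimension tube formula on the critical line $\mathrm{Re}\,s=D$ exposes oscillations that forbid Minkowski measurability. The refinements in \cite{Euler} and \cite{ErinSteffen} replace the hypothesis ``$I$ is feasible'' by ``$\Phi^m I$ is feasible for some $m$'', respectively by the existence of a strong feasible open set $O$ satisfying the projection condition for which $\e\mapsto\lambda(F_\e\cap(O\setminus\Phi O))$ is piecewise polynomial; in both cases the renewal/tube-formula argument still goes through. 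Nontriviality $D<1$ enters through \cite[Corollary~5.6]{ErinSteffen_Tilings} to guarantee $O\setminus\overline{\Phi O}\neq\emptyset$, so that the fundamental domain carries positive mass and the periodic limit is nonconstant.

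The main obstacle shared by \ref{it:LvF}--\ref{it:ErinSteffen} is precisely the production of a fundamental domain $O\setminus\Phi O$ whose parallel-volume function has the required (piecewise) polynomial form in $\e$: without such a structural hypothesis, one has no a priori handle on $g$ and the lattice renewal theorem cannot be applied. Removing this last hypothesis is the point of the present article, so Theorem~\ref{thm:literature} should be read as the starting point rather than as a self-contained chapter.
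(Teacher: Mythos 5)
The paper contains no proof of Theorem~\ref{thm:literature}: it is a compilation of results quoted verbatim from the cited references, so there is no in-house argument to measure your attempt against. Read as a survey of how those references proceed, your road map is accurate. Parts~\ref{it:Falconer} and \ref{it:Gatzouras} do rest on the renewal equation for the rescaled volume function and the nonlattice renewal theorem (with the extra work in \cite{Gatzouras} to absorb the contributions of touching cells under OSC), and parts~\ref{it:LvF}--\ref{it:ErinSteffen} do reduce to the lattice renewal theorem --- or, in \cite{LapidusvanFrankenhuijsen}, the explicit tube formula with complex dimensions on the critical line --- together with an argument that the resulting multiplicatively periodic limit is nonconstant. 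The one place your sketch conflates two distinct points is the sentence claiming that nontriviality, via $O\setminus\overline{\Phi O}\neq\emptyset$, makes ``the fundamental domain carry positive mass and the periodic limit nonconstant'': positive measure of the fundamental domain alone does \emph{not} force non-constancy, and establishing non-constancy is precisely what the piecewise-polynomial/feasibility hypotheses in \ref{it:LvF}--\ref{it:ErinSteffen} are for (your closing paragraph says this correctly, so the earlier sentence should be read as shorthand rather than as a gap in understanding). It is also the obstruction the present paper removes, by constructing the open set $U_\Lambda$ of Theorem~\ref{thm:openset} and replacing the polynomial-structure hypothesis with the one-sided-derivative argument of Section~\ref{sec:proofs2}. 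Since the theorem is recorded only as background, a road map with correct attributions is an appropriate level of detail; just be aware that it is not, and does not claim to be, a self-contained proof of any of the five items.
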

We point out that the results of \cite{Gatzouras,ErinSteffen} stated above in \ref{it:Gatzouras} and \ref{it:ErinSteffen} hold in arbitrary dimension. For self-similar subsets of $\mathbb R$ the assumptions in \ref{it:Euler} and \ref{it:ErinSteffen} are equivalent, which we prove below in Theorem~\ref{thm:KK15equivKPW16}.
To clarify that there exist lattice self-similar sets which are not covered by the results \ref{it:LvF}--\ref{it:ErinSteffen}, we now discuss some examples with more complicated feasible open sets.

\subsection{Self-similar sets with complicated feasible open sets}\label{sec:examples}
Let $A>1$ and let $\mathcal D\defeq\{d_1,\ldots,d_N\}\subset \mathbb R$ be a digit set. Define similarities $\phi_j$ acting on $\mathbb R$ by $\phi_j(x)=(x+d_j)/A$ for $j\in\{1,\ldots,N\}$. Further, let
\begin{equation*}
	\mathcal D_1\defeq\mathcal D,\quad
	\mathcal D_n\defeq \mathcal D + A\mathcal D_{n-1},\ n\geq 2\quad\text{and}\quad
	\mathcal D_{\infty}\defeq\textstyle{\bigcup_{n=1}^{\infty}}\mathcal D_n.
\end{equation*}
By \cite[Theorem~4.4]{HeLau} the IFS $\Phi\defeq\{\phi_1,\ldots,\phi_N\}$ satisfies OSC iff $\mathcal D_{\infty}$ is uniformly discrete and $\#\mathcal D_k=N^k$ for all $k\geq 1$. ($\mathcal D_{\infty}$ is \emph{uniformly discrete} if there exists $r>0$ so that $\lvert x-y\rvert\geq r$ for all $x\neq y\in\mathcal D_{\infty}$.)
Thus, if one chooses $A,d_1,\ldots,d_N$ to be nonnegative integers, then OSC is satisfied iff $d_i\neq d_j (\textup{mod}\,A)$ for $i\neq j$. Depending on the choice of $A$ and $\mathcal D$ feasible open sets can be rather complicated. E.\,g.\ for the IFS $\Phi$ given by $N=3$, $A=4$, $d_1=0$, $d_2=1$ and $d_3=6$ OSC is satisfied but the assumptions of Theorem~\ref{thm:literature}\ref{it:LvF}--\ref{it:ErinSteffen} are violated, which can be seen as follows. For \ref{it:LvF} and \ref{it:Euler} we provide a proof in the next paragraph. The statement for \ref{it:ErinSteffen} then directly follows from the equivalence of \ref{it:Euler} and \ref{it:ErinSteffen} which we prove in Theorem~\ref{thm:KK15equivKPW16} below.

Fix $m\in\mathbb N_0$ and let $U\defeq\Phi^m I$, where $I=(0,2)$ in this example. We claim that $U$ is not feasible for $\Phi$. Without loss of generality we can assume that $m$ is odd, i.\,e.\ $m=2k+1$ for some $k\in\N_0$, since feasibility of $\Phi^{m} I$ would imply feasibility of $\Phi^{m+1} I$.
Writing $\phi_{\om}\defeq\phi_{\om_1}\circ\cdots\circ\phi_{\om_n}$ for $\om\in\Sigma^n$ the claim directly follows from
\begin{equation}\label{eq:ex:nonempty}
	\phi_1(U)\cap\phi_2(U)
	\supset \phi_1\left( \phi_3\phi_{23}^k(I) \right) \cap \phi_2\left( \phi_{23}^k\phi_2(I) \right)
	\neq\emptyset,
\end{equation}
which we now prove:
First observe that $\phi_1(0)=0$, $\phi_3(2)=2$ and $\phi_{23}(2/3)=2/3$.
Second, note that for the left endpoints of the intervals $\phi_{13}\phi_{23}^k(I)$ and $\phi_2\phi_{23}^k\phi_2(I)$ we have
\begin{align*}
	\phi_{13}\phi_{23}^k(0) - \phi_2\phi_{23}^k\phi_2(0)
	&= \phi_{13}\left( \phi_{23}^k\left(\tfrac{2}{3}\right)-\tfrac{2}{3}\left(\tfrac{1}{4}\right)^{2k}\right)
	- \phi_2\left(\phi_{23}^k\left(\tfrac{2}{3}\right)-\tfrac{5}{12}\left(\tfrac{1}{4}\right)^{2k} \right)\\
	&=\left(\tfrac{1}{4} \right)^{2k+2}
	>0.
\end{align*}
Third, the intervals $\phi_{13}\phi_{23}^k(I)$ and $\phi_2\phi_{23}^k\phi_2(I)$ both have length $2\cdot(1/4)^{2k+2}$. Therefore, they must overlap in an interval of length $(1/4)^{2k+2}$, showing \eqref{eq:ex:nonempty}.

\renewcommand\windowpagestuff{%
	\centering
	\includegraphics[width=0.75\linewidth]{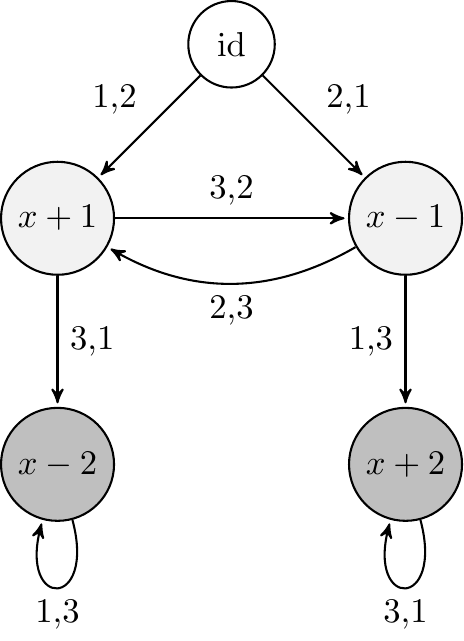}
}
\opencutright
\begin{remark}
	\begin{cutout}{12}{0.65\textwidth}{0.1\linewidth}{12}
	Indeed, in the above example, any feasible open set necessarily has an infinite number of connected components, disqualifying, in particular, the sets $\Phi^m I$. This was pointed out to us by Christoph Bandt, whom we wish to thank for sharing the following arguments with us: \\ \indent
	The \emph{dynamical boundary} of $F$ associated with $\Phi$ is the set $\textup{db}(F)\defeq\bigcup_{h} F\cap h F$, where the union is taken over all \emph{neighbour maps} $h$, i.\,e.\ maps of the form $h=\phi_u^{-1}\phi_{\om}$, where $u,\om\in\Sigma^n$ for some $n\in\mathbb N$ are so that $\phi_u(F)\cap\phi_{\om}(F)\neq\emptyset$ and $u_1\neq\om_1$. 
	When $x\in\textup{db}(F)$ then $\phi_u(x)\in\phi_u(F)\cap\phi_{\om}(F)$. Thus, any feasible open set $O$ for $\Phi$ may not intersect $\textup{db}(F)$. On the other hand, $\textup{db}(F)\subseteq F$ and whence $\textup{db}(F)\subseteq\overline{O}$. Therefore, $\textup{db}(F)\subseteq\partial{O}$.
	Now, if the dynamical boundary has infinite cardinality (which is the case here, see below), then $O$ necessarily has infinitely many connected components. \\ \indent
	In \cite{BandtMesing} general statements were obtained to determine the cardinality of the dynamical boundary of a limit set of a graph-directed system via neighbour graphs.
	The neighbour graph associated to the present example is depicted to the right. Its root is the identity and its vertices are the neighbour maps. ``An arrow with label $i,j$ is drawn from vertex $h$ to vertex $\overline{h}$ if $\overline{h}=\phi_i^{-1}h\phi_j$ for two marks $i,j\in\Sigma$. We keep only those arrows which correspond to proper neighbors, that is $\phi_i(F)\cap h\phi_j(F)\neq\emptyset$." \cite{Bandt_pinwheel}. Using the terminology from \cite{BandtMesing} the light shaded vertices are intermediate and the dark shaded ones are terminal. 
	According to \cite[Theorem 7]{BandtMesing} the terminal and intermediate vertices correspond to subsets of the dynamical boundary with cardinality one and countably infinite respectively. Thus, $\textup{db}(F)$ is countably infinite here.
\end{cutout}
\end{remark}

\section{Main results}\label{sec:results}

\begin{theorem}\label{thm:main}
	If $F$ is a nontrivial self-similar set in $\R$ generated by a lattice IFS $\Phi$ satisfying OSC, then $F$ is not Minkowski measurable.
\end{theorem}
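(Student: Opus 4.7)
My strategy is to bypass the need for a well-behaved feasible open set by analyzing $V(\varepsilon)\defeq\lambda(F_\varepsilon)$ directly through the gap-length structure of $F$. Since $F$ is nontrivial, $D\defeq\dim_M F<1$ and $\lambda(F)=0$. Let $\mathcal{G}$ denote the family of bounded connected components of $\mathbb R\setminus F$ (the gaps), write $\ell(G)$ for the length of $G\in\mathcal{G}$, and set $N(t)\defeq\#\{G\in\mathcal{G}:\ell(G)>t\}$. For all sufficiently small $\varepsilon>0$ a direct calculation yields
\begin{equation*}
V(\varepsilon) \;=\; 2\varepsilon\bigl(1+N(2\varepsilon)\bigr) + \sum_{G\in\mathcal{G},\,\ell(G)\le 2\varepsilon}\ell(G),
\end{equation*}
so that Minkowski measurability of $F$ reduces to an asymptotic statement for $t^D N(t)$ as $t\searrow 0$.

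The next step is to put the counting measure $\mu\defeq\sum_{G\in\mathcal G}\delta_{-\log\ell(G)}$ on $\mathbb R$ into a renewal equation. Using $F=\bigcup_i\phi_i(F)$ and a careful bookkeeping of how gaps behave under the $\phi_i$ in the presence of overlap, I expect to obtain an equation of the form
\begin{equation*}
\mu=\sigma*\mu+\nu,
\end{equation*}
where $\sigma\defeq\sum_{i\in\Sigma}\delta_{-\log r_i}$ encodes the IFS and $\nu$ is a finite nonzero ``source'' measure capturing the gaps that are not produced by scaling smaller gaps down through the IFS. In the lattice case $\nu$ will be supported on finitely many cosets of $a\mathbb Z$, where $a\defeq\log r$. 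Producing this bookkeeping in full generality --- when every feasible open set has infinitely many components, as in the Remark after Section~\ref{sec:examples} --- and showing $\nu\neq 0$ is what I expect to be the main technical obstacle; the natural tool is the Bandt--Mesing neighbour graph, which classifies the local configurations of the intersections $\phi_u(F)\cap\phi_w(F)$.

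With the renewal equation in place, the classical lattice renewal theorem applied to $t\mapsto t^D N(t)$ yields $N(t)=t^{-D}Q(-\log t)+o(t^{-D})$ as $t\searrow 0$ with $Q$ a bounded $a$-periodic function, and reinserting this into the gap formula gives $\varepsilon^{D-1}V(\varepsilon)\sim H(-\log\varepsilon)$ for an explicit bounded $a$-periodic $H$. It remains to show $H$ is nonconstant. For this I would pass to the Fourier/Mellin side of the renewal equation, where the nonzero Fourier coefficients of $H$ correspond, up to nonvanishing explicit factors, to residues at the non-real complex dimensions $s=D+2\pi ik/a$, $k\in\mathbb Z\setminus\{0\}$, of the transform $\widehat\nu(s)/(1-\sum_{i\in\Sigma} r_i^s)$. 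Since $\nu$ is a nonzero finite measure supported on finitely many cosets of $a\mathbb Z$, its transform $\widehat\nu$ is a nonzero Dirichlet polynomial in $r^s$ and hence cannot vanish at all of the countably many points $s=D+2\pi ik/a$. Therefore $H$ is nonconstant, $\varepsilon^{D-1}V(\varepsilon)$ does not converge, and $F$ is not Minkowski measurable.
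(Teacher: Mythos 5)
Your reduction of Minkowski measurability to the asymptotics of the gap-counting function $N(t)$ is correct and classical (it is the fractal-string formula of Lapidus--Pomerance, valid here since $\lambda(F)=0$), and your final Fourier/Mellin step would indeed work \emph{if} the renewal equation $\mu=\sigma*\mu+\nu$ held with $\nu$ a nonzero finite nonnegative measure supported on finitely many cosets of $a\mathbb Z$. But that middle step is not a technical obstacle to be deferred --- it is the entire content of the theorem, and as stated it is in general false or at least unproven. The difficulty is that for a gap $G$ of $F$ (a bounded component of $\R\setminus F$), the image $\phi_i(G)$ is disjoint from $\phi_i(F)$ but may meet $\phi_j(F)$ for $j\neq i$, so $\phi_i(G)$ need not be a gap of $F$ nor be contained in one; conversely a gap of $F$ lying in an overlap region $\phi_i(\overline I)\cap\phi_j(\overline I)$ is cut out simultaneously by several pieces and cannot be unambiguously assigned a ``parent''. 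Hence $\nu\defeq\mu-\sigma*\mu$ is a priori a \emph{signed} measure whose structure is governed by the dynamical boundary, and when that boundary is infinite (as in the example of Section~\ref{sec:examples}) there is no reason for $\nu$ to be finite, nonnegative, or supported on finitely many cosets --- precisely the hypotheses your renewal theorem and your nonvanishing argument for $\widehat\nu$ at $s=D+2\pi ik/a$ require. This is exactly why the earlier results \ref{it:LvF}--\ref{it:ErinSteffen} of Theorem~\ref{thm:literature} needed geometric hypotheses on a feasible open set, and why the conjecture remained open.

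The paper avoids this bookkeeping entirely. It first constructs (by adapting Schief's argument) a strong, \emph{compatible} feasible open set $U=U_\Lambda$ built from finitely many elementary intervals $\phi_u(I)$; feasibility gives $\Phi U\subset U$, so the generator $\Gamma=U\setminus\Phi U$ tiles $U$ exactly under the IFS and the renewal structure is automatic. It then invokes the ready-made criterion of \cite{ErinSteffen} (Theorem~\ref{thm:subresult}): $F$ is Minkowski measurable iff an explicit function $p$ on $(rg,g]$ is constant. Nonconstancy is established not by Fourier analysis but by an elementary differentiability argument: each component $G_j$ of $U\setminus\overline{\Phi U}$ contributes a piecewise linear function $\e\mapsto\lambda(F_\e\cap G_j)$ with a single kink, and after summing (with a uniform-convergence lemma for the termwise one-sided derivatives) either some kink survives in $(rg,g)$, making the right side of the identity nondifferentiable while the left side $C\e^{1-D}-\lambda(\Gamma)/(r^{D-1}-1)$ is smooth, or all kinks sit at the lattice points $2r^kg$ and the right side has constant derivative while the left side does not (using $D<1$). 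If you want to pursue your route, you would essentially have to restrict from the full gap family $\mathcal G$ to the sub-family of gaps arising as components of $U\setminus\overline{\Phi U}$ for such a $U$ --- at which point you have reconstructed the paper's argument.
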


Together with Theorem~\ref{thm:literature}~\ref{it:Gatzouras} we thus verify the conjecture of \cite[Conjecture~3]{Lapidus_Dundee} and \cite[Section~5.2]{Gatzouras} for self-similar sets in $\mathbb R$:

\begin{corollary}\label{cor:iff}
	Suppose that $F$ is a nontrivial self-similar set in $\R$ generated by an IFS $\Phi$ satisfying OSC. Then $F$ is Minkowski measurable iff $\Phi$ is nonlattice.
\end{corollary}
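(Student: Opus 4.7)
The plan is to apply a renewal-theoretic argument to the rescaled volume $V(\e):=\e^{D-1}\lambda(F_\e)$ with $D:=\dim_M(F)$, based on an intrinsic decomposition of $F_\e$ via the gaps of $F$, thereby avoiding any hypothesis on a well-behaved feasible open set. Since $F$ is nontrivial, $\lambda(F)=0$, so $\overline{I}\setminus F$ decomposes into a countable disjoint family $\{G_k\}_{k\in K}$ of bounded open intervals whose endpoints all lie in $F$. A direct inclusion--exclusion on the parallel set yields the intrinsic identity
\begin{equation*}
\lambda(F_\e)=\diam(F)+2\e-\sum_{k\in K}\bigl(\lambda(G_k)-2\e\bigr)^+\qquad(\e>0),
\end{equation*}
and Minkowski measurability of $F$ is equivalent to $V(\e)$ possessing a positive finite limit as $\e\searrow 0$.

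Next, I would organise the gaps into a self-similar family using Bandt's neighbour graph, as discussed in the Remark in Section~\ref{sec:examples}. Under OSC one can hope to obtain finitely many gap ``types'' $\gamma_1,\dots,\gamma_M$ such that every gap of $F$ arises as $\phi_\omega(\gamma_\ell)$ for a unique pair $(\omega,\ell)$ prescribed by the graph. Plugging this enumeration into the identity above and substituting $\e=r^{-t}$ (with $r$ the base of $\Phi$) converts the formula for $V(r^{-t})$ into a graph-directed discrete renewal equation. The lattice renewal theorem in its graph-directed version then produces
\begin{equation*}
V(r^{-t})=P(t)+o(1)\quad\text{as }t\to\infty
\end{equation*}
for some continuous $1$-periodic function $P$, so that non-Minkowski-measurability of $F$ reduces to showing that $P$ is nonconstant.

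I expect this last step to be the main obstacle. It amounts to ruling out accidental cancellation among the infinitely many gap contributions $\phi_\omega(\gamma_\ell)$, equivalently to showing that at least one nontrivial Fourier mode of $P$ at a complex dimension $D+\mathrm{i}k\cdot 2\pi/\log r$, $k\neq 0$, survives. Previous results (Theorem~\ref{thm:literature}~\ref{it:LvF}--\ref{it:ErinSteffen}) circumvented this by imposing a feasible open set with simple structure, so that only finitely many gap sizes occur and the driving function becomes piecewise polynomial. Without such a hypothesis, the argument would need to exploit the combinatorial finiteness of the neighbour graph to extract a positivity or irreducibility property of the renewal system, forcing at least one genuine oscillation in $P$ and hence precluding convergence of $V(\e)$.
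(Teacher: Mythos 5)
Your proposal does not prove the statement: it is a programme whose decisive step is left open. Two links in the chain are unsubstantiated. First, the reduction to a graph-directed renewal equation requires that the gaps of $F$ fall into finitely many types $\gamma_1,\dots,\gamma_M$ propagated by the maps $\phi_\omega$; you only say one can ``hope'' for this. Without a separation hypothesis the gap structure of $F$ is exactly what is hard to control -- the hypotheses in Theorem~\ref{thm:literature}~\ref{it:LvF}--\ref{it:ErinSteffen} exist precisely to force such a finite generating structure, and the example in Section~\ref{sec:examples} shows how badly it can fail for the naive candidates $\Phi^m I$. Second, and more seriously, you concede that proving the periodic limit function $P$ is nonconstant is ``the main obstacle.'' That is not a technical loose end; it \emph{is} the theorem. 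Ruling out cancellation among the Fourier modes of $P$ (equivalently, exhibiting a surviving nonreal complex dimension) is the very problem that remained open for twenty-five years, so a proof that defers it proves nothing. You also do not address the nonlattice direction of the equivalence, though that is minor since it is Theorem~\ref{thm:literature}~\ref{it:Gatzouras}.

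It is worth seeing how the paper escapes both difficulties, because it takes a genuinely different and more elementary route. Rather than working with the intrinsic gap sequence and renewal theory, it first constructs (Theorem~\ref{thm:openset}, adapting Schief) a strong \emph{compatible} feasible open set $U_\Lambda$ built from finitely many elementary intervals $\phi_u(I)$; compatibility gives the projection condition for free, so the exact criterion of Theorem~\ref{thm:subresult} applies: $F$ is Minkowski measurable iff the function $p$ in \eqref{eq:p} is constant on $(rg,g]$. Assuming constancy, one compares the two sides of \eqref{eq:Widerspruch}. The right-hand side is a uniformly convergent series of the piecewise linear functions $\e\mapsto r^{\ell(D-1)}\lambda(F_{r^\ell\e}\cap G_j)$, each with a single kink at $2r^\ell\e=\diam(G_j)$ and with nonnegative one-sided slopes. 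Either some kink lands inside $(rg,g)$, in which case the right-hand side fails to be differentiable there while the left-hand side $C\e^{1-D}-\lambda(\Gamma)/(r^{D-1}-1)$ is smooth; or every $\diam(G_j)$ lies in $2g\,r^{\mathbb N_0}$, in which case the right-hand side has constant derivative while the left-hand side has derivative $C(1-D)\e^{-D}$, which is nonconstant since $D<1$. Either way one gets a contradiction, with no renewal theorem, no Fourier analysis, and no need to classify the gaps into finitely many types. If you want to salvage your approach, the one-sided derivative dichotomy is the idea you are missing: it replaces the global nonvanishing-of-a-Fourier-mode argument by a purely local regularity obstruction.
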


\begin{remark}
	The nontriviality condition, $\dim_M(F)<1$, is necessary in the statements of Theorem~\ref{thm:main}, Corollary~\ref{cor:iff} and Theorem~\ref{thm:literature}\ref{it:LvF}--\ref{it:ErinSteffen} and cannot be removed: The unit interval $X:=[0,1]$ has Minkowski dimension $\dim_M(X)=1$. It is the self-similar set associated with the lattice IFS $\{x\mapsto x/2,x\mapsto (x+1)/2\}$ acting on $\mathbb R$. However, its Minkowski content $\mathcal M(X)=\lim_{\e\to 0}(1+2\e)=1$ exists as a positive and finite value. Hence $X$ is Minkowski measurable. In fact, any self-similar set $F$ in $\R$ with $\dim_M F=1$ is Minkowski measurable, see e.\,g.\  \cite[Theorem~1.1(i)]{ErinSteffen}.
\end{remark}

A key ingredient in the proof of Theorem~\ref{thm:main} is the construction of a relatively simple strong feasible open set, see Theorem~\ref{thm:openset} below and its proof. With this set at hand we can deduce Minkowski non-measurability from \cite[Theorem~3.1]{ErinSteffen}, see Theorem~\ref{thm:subresult} below.
Define $\Sigma^*\defeq\bigcup_{n=0}^{\infty} \Sigma^n$ with $\Sigma^0\defeq\{\emptyset\}$, where $\emptyset$ denotes the empty word. Moreover, for $\om=(\om_1,\ldots,\om_n)\in\Sigma^*$ write $\phi_{\om}\defeq\phi_{\om_1}\circ\ldots\circ\phi_{\om_n}$ and let $\phi_{\emptyset}$ be the identity map.
\begin{theorem}\label{thm:openset}
	Let $F\subset\R$ be the self-similar set generated by an IFS $\Phi$ satisfying OSC. Then there exists a strong and compatible feasible open set $U$ for $\Phi$, i.\,e.\ one which satisfies $U\cap F\neq\emptyset$ and $\partial U\subset F$.

	What is more, there always exists such a set $U$ that can be generated from a finite union of elementary intervals $\phi_{u}(I)$: there exist  $m\in\mathbb N_0$ and $\Lambda\subset\Sigma^m$ such that
	\begin{equation}\label{eq:U}
		U_{\Lambda}\defeq\bigcup_{\om\in\Sigma^*}\phi_{\om}\bigcup_{u\in\Lambda}\phi_u(I)
	\end{equation}
	defines a strong and compatible feasible open set for $F$.
\end{theorem}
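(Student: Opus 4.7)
My plan is to construct $U_\Lambda$ by approximating, from within, a strong feasible open set $O$ (whose existence under OSC is guaranteed by Schief's theorem) with elementary cylinder intervals. To begin, if $I$ itself is feasible for $\Phi$ (i.e.\ the intervals $\phi_i(I)$ are pairwise disjoint), I would take $m=0$ and $\Lambda=\{\emptyset\}$, giving $V=I$. Since $\phi_\om(I)\subset I$ for every $\om\in\Sigma^*$, this yields $U_\Lambda=I$. The endpoints of $\overline I$ are $\min F$ and $\max F$, both in $F$, so $\partial I\subset F$; and $I\cap F\neq\emptyset$ since $\#F=\infty$ under OSC.

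In general $I$ need not be feasible. Schief's theorem provides a strong feasible open set $O$ satisfying $O\cap F\neq\emptyset$, possibly with infinitely many connected components (as in the example of Section~\ref{sec:examples}). Pick $x_0\in O\cap F$ and a nested sequence of cylinders $\phi_{u^{(m)}}(\overline I)$ with $u^{(m)}\in\Sigma^m$ decreasing to $\{x_0\}$. Since $\operatorname{diam}\phi_u(\overline I)\to 0$ uniformly as $\lvert u\rvert\to\infty$ and $O$ is an open neighborhood of $x_0$, for $m$ large enough $\phi_{u^{(m)}}(\overline I)\subset O$. Define
\begin{equation*}
	\Lambda\defeq\{u\in\Sigma^m: \phi_u(I)\subset O\},\qquad V\defeq\bigcup_{u\in\Lambda}\phi_u(I),
\end{equation*}
which gives $\Lambda\neq\emptyset$ and $V\subset O$.

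For the verification, by OSC one has $\phi_i(O)\subset O$ and $\phi_i(O)\cap\phi_j(O)=\emptyset$ for $i\neq j$; iteratively $\phi_\om(V)\subset O$ for all $\om\in\Sigma^*$, so $U_\Lambda\subset O$. Hence $\phi_i(U_\Lambda)\cap\phi_j(U_\Lambda)\subset\phi_i(O)\cap\phi_j(O)=\emptyset$ for $i\neq j$, while invariance $\phi_i(U_\Lambda)\subset U_\Lambda$ is automatic from the definition, so $U_\Lambda$ is feasible. Strongness follows because for any $u\in\Lambda$ the set $\phi_u(F)\setminus\{\phi_u(\min F),\phi_u(\max F)\}$ is nonempty ($F$ is perfect under OSC, as $F=\Phi F$ prevents isolated points in a compact infinite set) and lies in $\phi_u(I)\cap F\subset U_\Lambda\cap F$. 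Compatibility $\partial U_\Lambda\subset F$ holds because every boundary point of $U_\Lambda$ is either an endpoint of some $\phi_{\om u}(\overline I)$---necessarily the image of $\min F$ or $\max F$ under a similarity, and hence in $F$---or an accumulation of such endpoints (of cylinders whose diameters shrink to $0$), and hence in the closed set $F$.

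The essential ingredient is Schief's theorem itself: without access to a strong feasible open set, the strategy never gets off the ground. A secondary subtlety is that $O$ may have infinitely many components, so one cannot expect all cylinders at level $m$ to fit inside $O$---but only nonemptiness of $\Lambda$ is required, and this is arranged by concentrating near a single point $x_0\in O\cap F$ where deep enough cylinders lie inside the local component of $O$ around $x_0$. A quantitative version of the diameter shrinkage, $\operatorname{diam}\phi_u(\overline I)\le r_{\max}^m\cdot\lvert I\rvert$ with $r_{\max}=\max_i r_i<1$, makes this concrete and selects an explicit depth $m$.
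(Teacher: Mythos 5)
Your proof is correct and follows essentially the same strategy as the paper: both arguments produce a nonempty $\Lambda\subset\Sigma^m$ so that $\bigcup_{u\in\Lambda}\phi_u(I)$ sits inside a Schief-type strong feasible open set $O$, whence $U_\Lambda\subset O$ inherits the disjointness $\phi_i(U_\Lambda)\cap\phi_j(U_\Lambda)=\emptyset$, while invariance, strongness and compatibility are read off directly from the form \eqref{eq:U}. The only difference is minor: the paper re-enters Schief's construction $O_{\kappa}=\bigcup_{u\in\Sigma^*}\phi_{u\kappa}(F_{\e})$ and takes $\Lambda=\{\kappa\om\mid\om\in\Sigma^{k}\}$ with $\bigcup_{\om\in\Sigma^{k}}\phi_{\om}I\subset F_{\e}$, whereas you use Schief's theorem only as a black box (existence of a strong feasible open set) and obtain a cylinder $\phi_u(\overline{I})\subset O$ by shrinking nested cylinders towards a point of $O\cap F$.
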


\begin{remark}\label{rem:U}
	For any $m\in\mathbb N_0$ and any nonempty $\Lambda\subset \Sigma^m$, the set $U_\Lambda$ in \eqref{eq:U} has nonempty intersection with $F$, since $I\cap F\neq\emptyset$. 
	Moreover, $U_{\Lambda}$ is compatible, because $\partial U_{\Lambda}\subset \overline{\bigcup_{\om\in\Sigma^*}\phi_{\om}\bigcup_{u\in\Lambda}\phi_u(\partial I)}\subset F$, where the last inclusion follows since $\partial I\subset F$ and $\phi_\om F\subset F$ for any $\om\in\Sigma^*$. However, it is not obvious that $U_\Lambda$ is a feasible open set and this is indeed only true for particular choices of $\Lambda$.
\end{remark}

\section{Proofs}\label{sec:proofs}

\subsection{Construction of a feasible open set $U_{\Lambda}$ -- Proof of Theorem~\ref{thm:openset}} Obviously, the first statement of the theorem follows from the second. In view of Remark~\ref{rem:U}, it therefore suffices to show that at least one of the sets $U_\Lambda$ (defined by \eqref{eq:U}) is feasible.
First observe that for any $m\in\N$ and any nonempty $\Lambda\subset\Sigma^m$ the set $U_\Lambda$  is nonempty and open, since $I$ has these properties, and that $\phi_i U_{\Lambda}\subset U_{\Lambda}$ for any $i\in\Sigma$. Therefore, all that remains to be shown is existence of a set $\Lambda\subset\Sigma^m$ for some $m\in\mathbb N_0$ such that $\phi_i(U_{\Lambda})\cap\phi_j(U_{\Lambda})=\emptyset$ for any $i\neq j\in\Sigma$. For this we adapt Schief's construction in \cite{Schief} of a strong feasible open set:
	
Let $r_{\om}$ denote the similarity ratio of $\phi_{\om}$ for $\om=(\om_1,\ldots,\om_n)\in\Sigma^*$. Note that $r_{\om}=r_{\om_1}\cdots r_{\om_n}$.
Fix $\e\in(0,1/6)$.
Schief showed \cite[proof of Theorem~2.1]{Schief} that there exists $\kappa\in\Sigma^*$ so that
\begin{equation*}
	O_{\kappa}
	\defeq
	\textstyle{\bigcup_{u\in\Sigma^*}}\phi_{u\kappa}\left(F_{\e}\right)
\end{equation*}
is a feasible open set for $\Phi$.	
As $r_i<1$ for all $i$, there is a minimal $k\in\mathbb N_0$ such that
\begin{equation*}
	\textstyle{\bigcup_{\om\in\Sigma^{k}}}\phi_{\om}I\subset F_{\e}.
\end{equation*}
Set $m\defeq k+\lvert\kappa\rvert$, where $\lvert\kappa\rvert$ denotes the length of $\kappa$, i.\,e.\ $\kappa\in\Sigma^{\lvert\kappa\rvert}$. Further, set $\Lambda\defeq\{\kappa\om\mid \om\in\Sigma^{k}\}$. Then $\emptyset\neq\Lambda\subset\Sigma^m$ and $U_{\Lambda}\subset O_{\kappa}$, whence $\phi_i(U_{\Lambda})\cap\phi_j(U_{\Lambda})=\emptyset$ for any $i\neq j\in\Sigma$.
This completes the proof of Theorem~\ref{thm:openset}.

\subsection{A criterion for Minkowski measurability.}\label{sec:criterion} In the proof of Theorem~\ref{thm:main} we will make use of a general Minkowski measurability criterion for self-similar sets in $\R^d$ (satisfying OSC) derived in \cite{ErinSteffen}. It is based on feasible open sets satisfying the projection condition and was obtained via classical renewal theory. We briefly restate a version of this criterion here, boiled down to our present one-dimensional setting.
Given $\Phi$, $O$ and $F$ as in Section~\ref{sec:prelim} we set
\begin{equation} \label{eq:Gandg}
	\Gamma\defeq O\setminus \Phi O\quad\text{and}\quad
	g\defeq \sup\{\inf_{y\in F}\lvert x-y\rvert\mid x\in\Gamma\}.
\end{equation}	

\begin{theorem}\cite[Theorem~3.1 and Corollary~3.2]{ErinSteffen}\label{thm:subresult} 
	 Let $F\subset\R$ be a nontrivial self-similar set generated by a lattice IFS $\Phi$ with base $r$. Suppose that $\Phi$ satisfies OSC with a strong feasible set $O$ satisfying the projection condition. Let $D:=\dim_M(F)$ and let $\Gamma$ and $g$ be defined as in \eqref{eq:Gandg}.
	 Define the function $p:(rg,g]\to\R$ by
  \begin{align}
  \label{eq:p}
    	p(\eps)
  	\defeq \eps^{D-1} \left[ \frac{\lambda(\Gamma)}{r^{D-1}-1} + \sum_{\ell = 0}^{\infty} r^{\ell(D-1)} \lambda(F_{r^{\ell}\eps}\cap \Gamma) \right].
  \end{align}
  Then $F$ is Minkowski measurable iff $p$ is constant on $(rg,g]$.
\end{theorem}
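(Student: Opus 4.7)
The plan is to translate the Minkowski-measurability question into a renewal-theoretic one for the rescaled volume $\tilde v(\eps) \defeq \eps^{D-1}\lambda(F_\eps)$: I would derive a multiplicative renewal equation and then invoke the lattice (arithmetic) version of Feller's renewal theorem to extract the asymptotic profile of $\tilde v$ along geometric sequences $\eps = r^\ell\eta$.

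The main step, and the main obstacle, is obtaining a clean renewal equation. Since $\Phi O\subset O$, the set $O$ splits disjointly (up to a Lebesgue nullset) as $O = \Gamma\sqcup\Phi O$. The projection condition guarantees that any $x\in\phi_i O$ within distance $\eps$ of $F$ has its nearest neighbour in $\phi_i F$, so the self-similar accounting $F_\eps\cap\phi_i O = \phi_i(F_{\eps/r_i}\cap O)$ is \emph{exact}; meanwhile the definition of $g$ in \eqref{eq:Gandg} is calibrated so that $F_\eps\subset\overline O$ for $\eps\leq g$, meaning no mass is lost outside $\overline O$. Combining these observations yields
\begin{equation*}
	\lambda(F_\eps) \;=\; \sum_{i\in\Sigma} r_i\,\lambda(F_{\eps/r_i}\cap O)\;+\;\lambda(F_\eps\cap\Gamma),
\end{equation*}
which, after multiplication by $\eps^{D-1}$ and the Moran identity $\sum_i r_i^D = 1$, becomes a discrete renewal equation of the schematic form
\begin{equation*}
	\tilde v(\eps) \;=\; \sum_{i\in\Sigma}\,r_i^D\,\tilde v(\eps/r_i)\;+\;z(\eps),
\end{equation*}
where $z(\eps)$ is essentially $\eps^{D-1}\lambda(F_\eps\cap\Gamma)$ plus a correction absorbing the discrepancy between $\lambda(F_{\eps/r_i})$ and $\lambda(F_{\eps/r_i}\cap O)$; the latter can be handled by a single iteration because $\partial O\subset F$ for a compatible $O$ (although strongness plus the projection condition suffice). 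Verifying that $z$ is directly Riemann integrable in the logarithmic variable uses boundedness of $\Gamma$ and continuity of $\eps\mapsto\lambda(F_\eps\cap\Gamma)$.

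Once the renewal equation is in hand, the rest is classical. Under the substitution $\tau = -\log\eps$, the lattice assumption $r_i = r^{n_i}$ places us in the arithmetic setting with span $\log r^{-1}$, and the arithmetic renewal theorem yields that for each $\eta\in(rg,g]$ the sequence $\tilde v(r^\ell\eta)$ converges as $\ell\to\infty$, with limit an explicit convolution of $z$ against the renewal measure. On the initial range $\eps>g$ we have $\lambda(F_\eps\cap\Gamma) = \lambda(\Gamma)$, and summing the resulting geometric series in $r^{D-1}$ contributes precisely the constant $\lambda(\Gamma)/(r^{D-1}-1)$ in \eqref{eq:p}; the remaining sum $\sum_{\ell\geq 0}r^{\ell(D-1)}\lambda(F_{r^\ell\eta}\cap\Gamma)$ is the $\eta$-dependent part. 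After matching the overall normalisation one identifies the limit with $p(\eta)$, so $\mathcal M(F) = \lim_{\eps\searrow 0}\tilde v(\eps)$ exists as a positive finite number iff the periodic profile $p$ is constant on the fundamental domain $(rg,g]$, which is exactly the stated criterion.
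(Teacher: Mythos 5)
This statement is not proved in the paper at all --- it is imported from \cite[Theorem~3.1 and Corollary~3.2]{ErinSteffen}, and, as Section~\ref{sec:criterion} itself notes, that source establishes it by exactly the route you describe (a scaling functional equation for $\lambda(F_\eps\cap O)$ derived from the projection condition, the arithmetic renewal theorem in the lattice case, and identification of the periodic limit profile with $p$, the sum over negative indices collapsing to the geometric series $\lambda(\Gamma)/(r^{D-1}-1)$). Your sketch is therefore the same approach and is essentially sound; the one slip is the claim that $F_\eps\subset\overline{O}$ for $\eps\le g$ --- the actual role of $g$ is that $\Gamma\subset F_{\eps}$ once $\eps\ge g$ (which is what you correctly use later), while the genuinely outer part $\lambda(F_\eps\setminus\overline{O})$ still has to be shown to contribute a convergent rescaled term, as is done in the cited Corollary~3.2.
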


Note that the series in the definition of $p$ is uniformly convergent in $\e$, see \cite[proof of Theorem~3.1]{ErinSteffen}.
\begin{remark}\label{rmk:projection}
	It is easily seen that a feasible open set of the form $U_{\Lambda}$ given in \eqref{eq:U} satisfies the projection condition. In fact, any strong and compatible feasible open set $O$ satisfies the projection condition, see e.\,g.\ \cite[Remark~3.20]{W15}.
\end{remark}

\subsection{Minkowski non-measurability -- Proof of Theorem~\ref{thm:main}} \label{sec:proofs2}
Let $r$ denote the lattice base of $\Phi$ and let $U$ be a strong and compatible feasible open set for $\Phi$. Such a set $U$ exists due to Theorem~\ref{thm:openset}.
We want to apply  Theorem~\ref{thm:subresult} and note that all its assumptions are satisfied; in particular, the projection condition, see Remark~\ref{rmk:projection}. We infer that the set $F$ is Minkowski measurable iff the function $p$ defined in \eqref{eq:p} is constant.

In the following we will demonstrate that the properties of $U$ imply that $p$ cannot be constant. We pursue a proof by contradiction, whence assume that there exists $C>0$ so that $p(\e)=C$ or, equivalently,
\begin{equation}\label{eq:Widerspruch}
	L(\e)\defeq C\e^{1-D}-\frac{\lambda(\Gamma)}{r^{D-1}-1}
	=\sum_{\ell = 0}^{\infty} r^{\ell(D-1)} \lambda(F_{r^{\ell}\e}\cap \Gamma)
	\eqdef R(\e)
\end{equation}
for $\e\in(rg,g]$. 
Define $G\defeq U\setminus\overline{\Phi U}$.
Clearly, $G$ is open and $G\subset\Gamma$.
Moreover, $\lambda(\Gamma\setminus G)=0$, since $\Gamma\setminus G=U\cap\partial \Phi U\subset U\cap\Phi\partial U\subset F$ and $\dim_M(F)<1$. Therefore, $\lambda(F_{r^{\ell}\e}\cap \Gamma)=\lambda(F_{r^{\ell}\e}\cap G)$. As stated in Section~\ref{sec:ss_OSC}, nontriviality implies $G\neq\emptyset$. Hence $G$ has countably many connected components $G_j$, $j\in J$, each of which is an open interval.
Without loss of generality suppose that either $J=\mathbb N_0$ or $J=\{0,\ldots,n\}$ for some $n\in\mathbb N_0$. Write $\diam(G_j)$ for the diameter of $G_j$ and assume that the $G_j$ are ordered so that $\diam(G_{j-1})\geq\diam(G_{j})$ for all $j\in J\setminus\{0\}$. Since $\partial G_j\subset F$ and $G_j\cap F=\emptyset$, we have
\begin{equation}\label{eq:Gj}
	\lambda(F_{t}\cap G_j)=
	\begin{cases}
		2t &\colon 0<2t\leq\diam(G_j),\\
		\diam(G_j) & \colon 2t>\diam(G_j).
	\end{cases}
\end{equation}
For $\ell\in\mathbb N_0$, $j\in J$ define $f_{\ell,j}\colon(rg,g]\to\mathbb R$ by
\begin{equation*}
	f_{\ell,j}(\e)\defeq r^{\ell(D-1)}\lambda(F_{r^{\ell}\e}\cap G_j).
\end{equation*}
Then $R(\e)=\sum_{\ell=0}^{\infty}\sum_{j\in J} f_{\ell,j}(\e)$.
Let $f^{(-)}_{\ell,j}(\e)$ and $f^{(+)}_{\ell,j}(\e)$ denote the left and right derivatives of $f_{\ell,j}$ at $\e$ respectively. By \eqref{eq:Gj}, we have that
\begin{equation}\label{eq:allelj}
	f^{(-)}_{\ell,j}(\e)\geq f^{(+)}_{\ell,j}(\e)\geq 0\ \text{for}\ \ell\in\mathbb N_0,\ j\in J\ \text{and}\ \e\in(rg,g).
\end{equation}
In fact, since $\lambda(F_{\eps}\cap G_j)$ is piecewise linear with at most two different slopes, the derivative $f'_{\ell,j}(\e)$ of $f_{\ell,j}$ exist at any $\e\in(rg,g)$ except for at most one point.
\begin{lemma}\label{lem:derivativesuniform}
	The series $\sum_{\ell=0}^{\infty}\sum_{j\in J} f^{(\pm)}_{\ell,j}$ converge uniformly on $(rg,g)$.
\end{lemma}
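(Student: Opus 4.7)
Since by \eqref{eq:allelj} we have $0\le f^{(+)}_{\ell,j}\le f^{(-)}_{\ell,j}$ on $(rg,g)$ for every $\ell,j$, it suffices to treat $\sum_{\ell,j} f^{(-)}_{\ell,j}$. The plan is to dominate this series term by term (in $\ell$) by the corresponding term of the series in \eqref{eq:Widerspruch} defining $R$, which is already known to converge uniformly in $\e$.

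From the piecewise linear description \eqref{eq:Gj}, the left derivative takes only the two values $2r^{\ell D}$ (when $2r^{\ell}\e\le\diam(G_j)$) and $0$. Summing over $j$ at fixed $\ell$ therefore gives
\[
\sum_{j\in J} f^{(-)}_{\ell,j}(\e)=2r^{\ell D}\,N_\ell(\e),\qquad N_\ell(\e)\defeq\#\{j\in J:\diam(G_j)\ge 2r^{\ell}\e\},
\]
and $N_\ell(\e)<\infty$ because $\sum_j\diam(G_j)\le\lambda(G)<\infty$. The key volume estimate is that every $j$ counted by $N_\ell(\e)$ contributes exactly $2r^{\ell}\e$ to $\lambda(F_{r^{\ell}\e}\cap G_j)$, so $2r^{\ell}\e\cdot N_\ell(\e)\le\lambda(F_{r^{\ell}\e}\cap G)$, which rearranges to
\[
\sum_{j\in J} f^{(-)}_{\ell,j}(\e)\le\frac{r^{\ell(D-1)}}{\e}\,\lambda(F_{r^{\ell}\e}\cap G).
\]
Since $\lambda(\Gamma\setminus G)=0$, the set $G$ may be replaced by $\Gamma$ in this bound.

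Summing over $\ell\ge L$ and using $\e>rg$ then yields
\[
\sum_{\ell\ge L}\sum_{j\in J} f^{(-)}_{\ell,j}(\e)\le\frac{1}{rg}\sum_{\ell\ge L} r^{\ell(D-1)}\,\lambda(F_{r^{\ell}\e}\cap\Gamma),
\]
which is $1/(rg)$ times the $L$-tail of the series appearing in Theorem~\ref{thm:subresult}. That series converges uniformly in $\e$ on $(rg,g]$ (as noted immediately after the theorem), so its $L$-tail tends to $0$ uniformly as $L\to\infty$. This will give uniform convergence of $\sum_{\ell,j}f^{(-)}_{\ell,j}$, and hence, via \eqref{eq:allelj}, also of $\sum_{\ell,j}f^{(+)}_{\ell,j}$. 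The only delicate point is extracting the clean term-by-term inequality $\sum_j f^{(-)}_{\ell,j}(\e)\le r^{\ell(D-1)}\e^{-1}\lambda(F_{r^{\ell}\e}\cap G)$; once it is in place, the conclusion is inherited from the already-established uniform convergence of $R$.
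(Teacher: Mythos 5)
Your proposal is correct and follows essentially the same route as the paper: both count, for each $\ell$, the components $G_j$ with $2r^{\ell}\e\le\diam(G_j)$ (on which alone $f^{(-)}_{\ell,j}$ is nonzero and equals $2r^{\ell D}$), bound this count via $2r^{\ell}\e\,N_\ell(\e)\le\lambda(F_{r^{\ell}\e}\cap\Gamma)$, and then inherit uniform smallness of the tails from the uniform convergence of the series in \eqref{eq:p}, using $\e>rg$. The term-by-term inequality you flag as the delicate point is exactly the paper's estimate and holds by \eqref{eq:Gj} together with the disjointness of the $G_j$.
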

\begin{proof}
	Let $N_{\e}(\ell)\defeq\#\{j\in J\mid 2 r^{\ell}\e\leq\diam(G_j)\}$.
	As remarked in Section~\ref{sec:criterion}, the series $\sum_{\ell = 0}^{\infty} r^{\ell(D-1)} \lambda(F_{r^{\ell}\e}\cap \Gamma)$ from \eqref{eq:p} is uniformly convergent in $\e$. Thus, there exists a sequence $(c_n)_n$ so that $\lim_{n\to \infty}c_n=0$ and so that for $\e\in(rg,g]$, $n\in\mathbb N$
	\begin{align*}
		c_n
		\geq 
		\sum_{\ell = n}^{\infty} r^{\ell(D-1)} \sum_{j\in J}\lambda(F_{r^{\ell}\e}\cap G_j)
		\geq \sum_{\ell = n}^{\infty} r^{\ell(D-1)} \sum_{j=0}^{N_{\e}(\ell)-1}
		2r^{\ell}\e
		= \e\sum_{\ell = n}^{\infty} 2r^{\ell D}N_{\e}(\ell).
	\end{align*}
	Since $f_{\ell,j}^{(-)}(\e)=0$ if $2r^{\ell}\e>\diam(G_j)$, this yields
	\begin{align*}
	\sum_{\ell=n}^{\infty}\sum_{j\in J} f^{(-)}_{\ell,j}(\e)
	= \sum_{\ell=n}^{\infty}\sum_{j=0}^{N_{\e}(\ell)-1} 2r^{\ell D}	
	=\sum_{\ell=n}^{\infty} 2r^{\ell D}N_{\e}(\ell)
	\leq \frac{c_n}{\e}
	\leq \frac{c_n}{rg}
	\end{align*}
	which proves uniform convergence of $\sum_{\ell=0}^{\infty}\sum_{j\in J} f^{(-)}_{\ell,j}$ and by \eqref{eq:allelj} also of the series $\sum_{\ell=0}^{\infty}\sum_{j\in J} f^{(+)}_{\ell,j}$.
\end{proof}

\begin{remark}
   Observe that $f_{\ell,j}$ are Kneser functions of order 1, i.\,e. they satisfy
\begin{align*}
  f_{\ell,j}(\mu b)-f_{\ell,j}(\mu a)\leq \mu (f_{\ell,j}(b)-f_{\ell,j}(a)),\
\end{align*}
for all $a,b\in(rg,g)$ with $a\leq b$ and any $\mu\geq 1$ such that $\mu b<g$. This can be checked directly, but it also follows from \cite[Lemma~5]{Stacho}, since the intervals $G_j$ are metrically associated with $F$ (meaning that for each point $x\in G_j$ there is a point $y\in F$ with $\lvert x-y\rvert=\inf_{a\in F}\lvert x-a\rvert$ such that the whole segment between $x$ and $y$ is contained in $G_j$) and therefore $\lambda(F_t\cap G_j)$ is a Kneser function of order 1 on $(0,\infty)$. Hence, the assertion of Lemma~\ref{lem:derivativesuniform} is a special case of \cite[Lemma~4]{Stacho}.
\end{remark}

In order to obtain a contradiction, we consider two cases:

\underline{\textsc{Case} 1:} \emph{There exist $\ell^*\in\mathbb N_0$, $j^*\in J$ and $x\in(rg,g)$ so that $f^{(-)}_{\ell^*,j^*}(x)\neq f^{(+)}_{\ell^*,j^*}(x)$}.\\
Equation \eqref{eq:Gj} implies
\begin{align}\label{eq:ljstar}
	 f^{(-)}_{\ell^*,j^*}(x)=2 r^{\ell^* D}>0=f^{(+)}_{\ell^*,j^*}(x)
\end{align}
Lemma~\ref{lem:derivativesuniform} shows that $R^{(\pm)}(x)$ exist and that $R^{(\pm)}(x) = \sum_{\ell=0}^{\infty}\sum_{j\in J}^{\infty} f^{(\pm)}_{\ell,j}(x)$.
With \eqref{eq:allelj} and \eqref{eq:ljstar} we thus obtain
\begin{align*}
	R^{(-)}(x) - R^{(+)}(x)
	&\geq f^{(-)}_{\ell^*,j^*}(x) - f^{(+)}_{\ell^*,j^*}(x)
	=2r^{\ell^* D}
	>0.
\end{align*}
Hence, as opposed to the function $L$, the function $R$ is not differentiable at $x$, contradicting \eqref{eq:Widerspruch}.

\underline{\textsc{Case} 2:} \emph{The derivative $f'_{\ell,j}$ exists on $(rg,g)$ for all $\ell\in\mathbb N_0$, $j\in J$.}\\
In this case, for any $j\in J$ there exists $k=k(j)\in\mathbb N_0$ so that $\diam (G_j)=2r^kg$, yielding
\begin{equation}\label{eq:derivativeflj}
	f'_{\ell,j}\equiv 2r^{\ell D}\ \text{on}\ (rg,g)\ \text{for all}\  \ell\geq k(j), \ \text{and}\ f'_{\ell,j}\equiv 0 \ \text{otherwise}.
\end{equation}
By Lemma~\ref{lem:derivativesuniform}, $R'$ exists and coincides with $\sum_{\ell=0}^{\infty}\sum_{j\in J}f'_{\ell,j}$ which by \eqref{eq:derivativeflj} is constant on $(rg,g)$. However, $L'(\e)=C(1-D)\e^{-D}$ which, due to the nontriviality of $F$ (and since $C>0$), is clearly not constant. Therefore, we obtain a contradiction to \eqref{eq:Widerspruch} also in the second case.
This completes the proof of Theorem~\ref{thm:main}.

\section{Equivalence of {\rm \ref{it:Euler}} and {\rm \ref{it:ErinSteffen}} of Theorem~\ref{thm:literature}}\label{sec:equiv}

During our discussions the question arose whether the classes of self-similar subsets of $\mathbb R$ covered by the assertions \ref{it:Euler} and \ref{it:ErinSteffen} of Theorem~\ref{thm:literature} are equivalent. The following statement gives an affirmative answer (irrespective of the IFS being lattice or nonlattice).
\begin{theorem}\label{thm:KK15equivKPW16}
	Let $\Phi$ be an IFS in $\R$ satisfying OSC such that the associated invariant set $F$ is nontrivial. Then the following assertions are equivalent.
	\begin{enumerate}
		\item \label{it:UnionElementaryOpen} $\Phi^m I$ is a feasible open set for $\Phi$ for some $m\in\mathbb N_0$.
		\item \label{it:pluriphaseequiv} There exists a strong feasible open set $O$ for $\Phi$ satisfying the projection condition that allows for a finite partition of $(0,\infty)$ so that \mbox{$\e\mapsto\lambda (F_{\e}\cap (O\setminus\Phi O))$} is polynomial on each partition interval.
	\end{enumerate}
\end{theorem}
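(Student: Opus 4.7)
My plan is to prove the two implications of Theorem~\ref{thm:KK15equivKPW16} separately.

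For \textbf{(i) $\Rightarrow$ (ii)}, take $O := \Phi^m I$, which is the finite union of the open intervals $\phi_u(I)$, $u \in \Sigma^m$. Nontriviality of $F$ ensures $I \cap F \neq \emptyset$, so $\phi_u(I) \cap F \supset \phi_u(I \cap F) \neq \emptyset$ shows $O \cap F \neq \emptyset$, i.e., $O$ is strong. For compatibility, every boundary point of $O$ must lie in some $\phi_u(\overline{I}) \setminus \phi_u(I) = \phi_u(\partial I)$, and since $\partial I = \{\inf F, \sup F\} \subset F$ and $\phi_u(F) \subset F$, we get $\partial O \subset F$. By Remark~\ref{rmk:projection}, the projection condition holds. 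Both $O$ and $\Phi O = \Phi^{m+1} I$ are finite unions of open intervals, so $G := O \setminus \overline{\Phi O}$ decomposes into finitely many open-interval components $G_1, \ldots, G_K$, each with $\partial G_j \subset \partial O \cup \partial \Phi O \subset F$. Then \eqref{eq:Gj} makes each $\eps \mapsto \lambda(F_\eps \cap G_j)$ piecewise linear with a single breakpoint at $\diam G_j / 2$, and since $(O \setminus \Phi O) \setminus G \subset \partial \Phi O \subset F$ has Lebesgue measure zero (as $\dim_M F < 1$), summing gives that $\eps \mapsto \lambda(F_\eps \cap (O \setminus \Phi O))$ is piecewise linear, hence piecewise polynomial, with finitely many breakpoints.

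For the reverse implication \textbf{(ii) $\Rightarrow$ (i)}, the first step is to deduce from the partition hypothesis that $G = O \setminus \overline{\Phi O}$ has only finitely many interval components $G_1, \ldots, G_K$, each with both endpoints in $F$: each component contributes a summand to $\lambda(F_\eps \cap G)$ with a breakpoint at $\diam G_j / 2$, so the piecewise polynomial property forces only finitely many distinct diameters among the $G_j$, and $\lambda(G) = \sum_j \diam G_j < \infty$ then forces only finitely many components in total. The projection condition together with strength should be leveraged to show that every endpoint of each $G_j$ is a point of the form $\phi_w(\inf F)$ or $\phi_w(\sup F)$ for some finite word $w \in \Sigma^*$. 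Choosing $m \in \mathbb{N}_0$ larger than the length of every such coding word, the plan is to show $\Phi^m I$ is feasible: the iterates $\bigcup_{n \geq 0} \Phi^n G$ exhaust $O \setminus F$, so the open intervals $\phi_\omega(I)$ with $\omega \in \Sigma^{m+1}$ of differing first letters are separated inside $O$ by images of the $G_j$'s under the IFS, giving $\phi_i(\Phi^m I) \cap \phi_j(\Phi^m I) = \emptyset$ for $i \neq j$.

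The \textbf{main obstacle} lies in this last step: extracting a uniform depth $m$ from the finite collection of coded endpoints and rigorously translating it into the combinatorial disjointness statement for the $\phi_i(\Phi^m I)$. One must rule out pathological overlap patterns such as those of the example in Section~\ref{sec:examples}, where any feasible open set has infinitely many components and the Bandt--Mesing dynamical boundary is infinite; the piecewise polynomial assumption is precisely the hypothesis that excludes these. I expect the cleanest route is to establish an equivalence between finiteness of $\{G_j\}$, finiteness of the neighbour graph of $\Phi$, and feasibility of some $\Phi^m I$ --- either directly, by tracking how overlap endpoints propagate under $\Phi$, or by comparison with the feasible set $U_\Lambda$ constructed in Theorem~\ref{thm:openset}.
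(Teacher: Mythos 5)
Your direction \ref{it:UnionElementaryOpen} $\Rightarrow$ \ref{it:pluriphaseequiv} is fine and matches the paper's (brief) argument. The problem is the converse, which is the substance of the theorem: you correctly identify, and essentially prove, the paper's first step --- that the piecewise-polynomial hypothesis forces $O\setminus\Phi O$ to have only finitely many gap components (the paper's claim (I); note only that your appeal to \eqref{eq:Gj} presupposes both endpoints of each $G_j$ lie in $F$, which is not given since $O$ is not assumed compatible; the paper avoids this by locating the breakpoint at $h_j=\inf\{\e:H_j\subset F_\e\}$ rather than at $\diam G_j/2$). But everything after that is a plan, not a proof, and it rests on an unjustified and dubious intermediate claim: that every endpoint of every $G_j$ has the form $\phi_w(\inf F)$ or $\phi_w(\sup F)$. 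Nothing in the hypotheses forces $\partial O\subset F$, let alone that the relevant boundary points are finitely coded images of the endpoints of $I$; and even granting this, you give no argument for why a ``uniform depth $m$'' would yield $\phi_i(\Phi^m I)\cap\phi_j(\Phi^m I)=\emptyset$. You flag this yourself as the main obstacle, which is an accurate self-assessment: the bridge from ``finitely many gaps'' to ``some $\Phi^m I$ is feasible'' is exactly what is missing.

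For comparison, the paper closes this gap by a different mechanism that never needs coded endpoints. It replaces $O$ by $U\defeq\inte(\overline{O})$, whose connected components $U_i$ are open intervals at pairwise positive distance. From the finiteness of the components of $U\setminus\Phi U$ it extracts the finite set $E^*$ of components of $U$ meeting $U\setminus\Phi U$, shows that every other component is a precise image $\phi_k(U_j)$ of some component and hence (by a largest-first induction) descends from $E^*$, and concludes $\bigcup_{j\in E^*}U_j\cap F\neq\emptyset$. A second induction, pulling back along a word $\om$ with $\phi_\om I\subset\bigcup_{j\in E^*}U_j$, shows that $I$ itself meets only finitely many $U_i$; then $F\subset\bigcup_{i\in\tilde E}\overline{U_i}$ with $\tilde E$ finite and the positive separation of the $U_i$ force $\Phi^m I\subset U$ for $m$ large, from which feasibility of $\Phi^m I$ is immediate. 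If you want to salvage your outline, you should adopt some version of this ``finite generation of components'' argument rather than trying to control the individual gap endpoints.
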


\begin{proof}
To begin with, note that any feasible open set of the form $\Phi^m I$ allows for a finite partition of $(0,\infty)$ so that $\e\mapsto\lambda(F_{\e}\cap (\Phi^m I\setminus \Phi^{m+1}I))$ is polynomial on each partition interval. Therefore, \ref{it:UnionElementaryOpen} implies \ref{it:pluriphaseequiv}.

For the converse, suppose that $O$ is as in \ref{it:pluriphaseequiv}.
Consider $U\defeq \inte\left(\overline{O}\right)$, where $\inte$ denotes the topological interior. Then $U=\bigcup_{i\in E} U_i$ is a union of open intervals $U_i$ with the property that the distance between any two $U_i$ is strictly positive.
Let $\tilde E:=\{i\in E: U_i\cap I\neq\emptyset\}$. The key part of the proof is to show that
\begin{equation}\label{claimIII} 
	\#\tilde E<\infty. 
\end{equation}
Before proving \eqref{claimIII} we demonstrate that \eqref{claimIII} implies assertion \ref{it:UnionElementaryOpen}.
Since $O$ is a strong feasible open set for $\Phi$, so is $U$, which can be seen by contradiction.
Therefore, $F\subset \overline{U}$ and so, by \eqref{claimIII}, $F\subset \overline{U}\cap \overline{I}\subset\bigcup_{i\in \tilde E} \overline{U_i}$, which implies that there exists $m\in\mathbb N$ so that $\Phi^m\overline{I}\subset \bigcup_{i\in \tilde E} \overline{U_i}$ (simply choose $m$ large enough that, for any $w\in\Sigma^m$, $\diam(\phi_w\overline{I})$ is smaller than the minimal distance between the finitely many $U_i$). The property that the $U_i$ have positive distance to one another implies that $\Phi^m I\subset \bigcup_{i\in E} U_i$. From this inclusion it is easy to see that $\Phi^mI$ is feasible for $\Phi$, whence assertion \ref{it:UnionElementaryOpen} holds.

To verify \eqref{claimIII} let $c_1, \ldots,c_k\in (0,\infty)$ denote the partition points of the partition of $(0,\infty)$ associated with $O$. 
Let $\{H_j\}_{j\in J}$ denote the collection of connected components of $\inte (O\setminus\Phi O)$. Clearly, each $H_j$ is an open interval and it is easy to see that $H_j\cap F=\emptyset$.
We show \eqref{claimIII} in four steps. Our first one is to prove
\begin{enumerate}[label=(\Roman*),start=1]
  \item \label{it:claimI} $\# J<\infty$.
\end{enumerate}
For this, set $h_j\defeq\inf\{\e>0: H_j\subset F_\eps\}$.
Observe that $\e\mapsto\lambda (F_{\e}\cap H_j)$ is constant (and equal to $\lambda(H_j)$) for $\e>h_j$, and linear (and nonconstant) in a left neighborhood of $h_j$. In particular, the function $\e\mapsto \lambda (F_{\e}\cap H_j)$ is not differentiable at $h_j$ and so $h_j$ must be one of the partition points $c_{\ell}$. Next we show that for each of the finitely many $c_{\ell}$ the associated set $J_{\ell}\defeq\{j\in J\mid h_j=c_{\ell}\}$ is of finite cardinality: For $j\in J_{\ell}$ let $\widetilde{H_j}$ be the largest open interval (or one of the two in case of non-uniqueness) satisfying $H_j\cap \widetilde{H_j}\neq \emptyset$ and $\widetilde{H_j}\subseteq F_{<c_{\ell}}\setminus F$, where $F_{<c_{\ell}}\defeq \{x\in\mathbb R\mid \inf_{a\in F}\lvert x-a\rvert <c_{\ell} \}$.
By construction, $\{\widetilde{H_j} \}_{j\in J_{\ell}}$ is a pairwise disjoint family (here it is important to restrict to $j\in J_{\ell}$).
Furthermore, $\lambda(\widetilde{H_j})=c_{\ell}$. Therefore,
\begin{align*}
	\# J_{\ell}
	= \tfrac{1}{c_{\ell}}\sum_{j\in J_{\ell}} \lambda(\widetilde{H_j})
	=\tfrac{1}{c_{\ell}}\lambda\Big(\bigcup_{j\in J_{\ell}} \widetilde{H_j} \Big)
	\leq \tfrac{1}{c_{\ell}}\lambda(F_{c_{\ell}})
	<\infty
\end{align*}
and whence $\# J =\sum_{\ell=1}^{k} \# J_{\ell}<\infty$, showing \ref{it:claimI}.

Our second step is to verify that
\begin{enumerate}[label=(\Roman*),start=2]
	\item \label{claimII} the number of connected components of $U\setminus \Phi U$ is finite.
\end{enumerate}
For this, note that the family of connected components of $U\setminus \Phi U$ essentially coincides with $\{H_j\}_{j\in J}$, with the only possible differences occurring at the boundary points $\bigcup_{j\in J}\partial H_j$. More precisely, each $H_j$ is a subset of $U\setminus \Phi U$ but not necessarily a connected component of this set, and $U\setminus \Phi U\subset \bigcup_{j\in J} \overline{H}_j$. Therefore, $U\setminus \Phi U$ has at most $\#J$ connected components and \ref{it:claimI} implies \ref{claimII}.

For the third step let $U_{i_1},\ldots, U_{i_n}$ denote those connected components of $U$ which intersect $U\setminus \Phi U$ and let $E^*\defeq\{i_1,\ldots,i_n \}$ denote the respective index set. (The finiteness of $E^*$ is clear from assertion \ref{claimII}.) We prove
\begin{enumerate}[label=(\Roman*),start=3]
	\item \label{claimIV} $\bigcup_{j\in E^*} U_{j}\cap F\neq\emptyset$.
\end{enumerate}
If $i\in E\setminus E^*$ then $U_i\subseteq \Phi U=\bigcup_{j,k} \phi_k(U_j)$, which is a disjoint union by OSC and definition of the $U_j$. As furthermore $U_i$ and each $\phi_k(U_j)$ is open and connected there exist $k\in\Sigma$, $j\in E$ so that $U_i\subset\phi_k(U_j)$. On the other hand, since $\phi_k(U_j)$ is a connected subset of $U$ and intersects the connected component $U_i$, it must be contained in $U_i$. Thus $U_i=\phi_k(U_j)$, i.\,e.\ $U_i$ is the precise image of $U_j$ under $\phi_k$.
Amongst $\{U_i\mid i\in E\setminus E^*\}$ there is at least one largest bounded one, $U_{i^*}$, which needs to be an image of some $U_{j}$ with $j\in E^*$ by the contraction property of the $\phi_k$. Amongst $\{U_i\mid i\in E\setminus E^*\}\setminus \{U_{i^*}\}$ there again is a largest bounded one and inductively we see that each bounded $U_i$ with $i\in E\setminus E^*$ is the image of one of the sets $U_{j}$, $j\in E^*$ and that also the possible unbounded components need to be amongst $U_{i_1},\ldots, U_{i_n}$.
Since $U$ is strong, we conclude $\bigcup_{j\in E^*} U_{j}\cap F\neq\emptyset$, showing \ref{claimIV}.

In the final step we deduce \eqref{claimIII} from the above:
By \ref{claimIV}, we can find a minimal $m\in\mathbb N$ and $\om\in \Sigma^m$ so that $\phi_{\om}I\subset \bigcup_{j\in E^*} U_{j}$. Let $\om=\om_1\cdots\om_m$.
If $\phi_{\om_2\cdots\om_m}I$ intersected infinitely many of the positively separated $U_i$, then $U\setminus\Phi U$ would have infinitely many connected components. 
(Assume that $\phi_{\om_2\cdots\om_m}I$ intersects infinitely many $U_i$, say $U_{j_1}, U_{j_2},...$. Then on the one hand,  $\phi_\om I$ intersects all the sets $\phi_{\om_1} U_{j_k}$, i.\,e.\ infinitely many. On the other hand, $\phi_\om I$ is an interval and thus contained in a connected component $V$ of $U$. Thus, as each $\phi_{\om_1}U_{j_k}$ is connected, $\phi_{\om_1}U_{j_k}\subset V$. Further, the family $\{ \phi_{\om_1} U_{j_k}\}_k$ is disjoint, which implies that the set $V\setminus \phi_{\om_1}U$ and thus $V\setminus \Phi U$ and $U\setminus \Phi U$ must have infinitely many connected components.) 
This contradicts assertion \ref{claimII}. Thus, $\phi_{\om_2\cdots\om_m}I$ intersects only finitely many $U_i$. Inductively, we see that $I$ can only intersect finitely many $U_i$, whence $\#\tilde E<\infty$, proving \eqref{claimIII}.
\end{proof}

\bibliographystyle{alpha}
\bibliography{Referenzen}

\end{document}